\newtheorem{thm}{Theorem}
\newtheorem{prop}{Proposition}
\theoremstyle{definition}
\newtheorem*{xrem}{Remark}
\numberwithin{equation}{section}
\newcommand{\srpot}[1]{\mathfrak{N}_{#1}}
\begin{document}


\baselineskip=17pt



\title{On negative results concerning Hardy means}

\author{Pawe{\l} Pasteczka}
\address{Institute of Mathematics\\University of Warsaw\\
02-097 Warszawa, Banach str. 2, Poland}
\email{ppasteczka@mimuw.edu.pl}

\date {November 09, 2013}

\begin{abstract}
In the present paper we are going to prove some necessary condition for a mean to be Hardy. This condition is then applied to completely characterize the Hardy property among the Gini means.
\end{abstract}

\subjclass[2010]{Primary 26E60; Secondary 26D15, 26D07}

\keywords{Hardy means, Gini means, Gaussian product}

\maketitle

\section{Introduction}


A function $\mathfrak{A} \colon \bigcup_{n=1}^{\infty} I^n \rightarrow I$ is a 
\emph{Mean} ($I$ -- an interval) if 
\begin{itemize}
\item[(i)]$\mathfrak{A}$ is non-decreasing with respect to each variable, 
\item[(ii)]$\mathfrak{A}(\underbrace{a,\ldots,a}_{n})=a$ for any $a \in I$ and $n \in \mathbb{N}$. 
\end{itemize}

In 2004 P\'ales and Persson, \cite{PaPe04}, proposed the following definition for a mean to be \emph{Hardy}:
Let $I \subset \mathbb{R}_{+}$ be an interval, $\inf I=0$. A mean $\mathfrak{A}$ defined on an interval $I$ is \emph{Hardy} if there exists a constant $C$ such that for any $a \in l^1(I)$ 
$$\sum_{n=1}^{\infty} \mathfrak{A}(a_1,\ldots,a_n)<C\sum_{n=1}^{\infty} a_n.$$

In fact, such a definition had been felt in the air since the year 1920 when it was proved that the power mean
$$\srpot{\lambda}(a_1,\ldots,a_n):=
\begin{cases} 
\min(a_1,\ldots,a_n)	& \textrm{if\ } \lambda = - \infty\,,\\
\Big(\frac{1}{n} \sum_{k=1}^{n} a_k^\lambda\Big)^{1/\lambda}	& \textrm{if\ } \lambda \ne 0\,,\\
\Big(\prod_{k=1}^{n} a_k\Big)^{1/n}	& \textrm{if\ } \lambda = 0\,,\\
\max(a_1,\ldots,a_n)	& \textrm{if\ } \lambda = + \infty
\end{cases}$$
 ($I=\mathbb{R}_{+}$) was Hardy if and only if $\lambda<1$ (cf. \cite{Hardy20}). 

During the last hundred years many results where obtained in the field of Hardy Means. The reader may find them in catching surveys \cite{PS,DMcG,OP}, and in a recent book \cite{KMP}. In the present paper we are going to give a necessary condition for a mean to be Hardy. Namely we are going to prove the following 

\begin{thm}
\label{thm:main}
Let $\mathfrak{A}$ be a mean defined on an interval $I$, $(a_n)$ be a sequence of positive numbers in $I$ satisfying $\sum a_n = + \infty$ and \,$\lim a_n=0$.

If\, $\lim a_n^{-1} \mathfrak{A}(a_1,\ldots a_n) = \infty$ then $\mathfrak{A}$ is not Hardy.
\end{thm}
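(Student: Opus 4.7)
Abbreviate $M_n := \mathfrak{A}(a_1,\ldots,a_n)$ and $S_N := \sum_{k=1}^N a_k$. The hypothesis reads $M_n/a_n \to \infty$, and by assumption $S_N \to \infty$. The overall strategy is: for each prospective Hardy constant $C>0$, produce a sequence $b \in \ell^1(I)$ that violates the Hardy inequality with constant $C$. The construction is to keep $(a_n)$ on a long initial block $n\le N$ and complete it with a tail of very small elements of $I$, which exist because $\inf I = 0$.

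The first step is a Stolz--Cesàro-type observation:
$$ \frac{\sum_{n=1}^N M_n}{\sum_{n=1}^N a_n} \longrightarrow \infty \qquad \text{as } N \to \infty.$$
Indeed, given any $K>0$, pick $n_0$ with $M_n \ge 2K a_n$ for $n \ge n_0$; then $\sum_{n=1}^N M_n \ge 2K(S_N - S_{n_0-1})$, and since $S_N \to \infty$ the ratio exceeds $K$ for all sufficiently large $N$.

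Second, fix $C>0$ and, using the first step, choose $N$ with $\sum_{n=1}^N M_n > 2C\, S_N$. Then pick $c_n \in I$ for $n > N$ with $\sum_{n>N} c_n < S_N$ (e.g.\ $c_n < 2^{-n} S_N$, which is possible because $\inf I=0$). Define $b_n := a_n$ for $n \le N$ and $b_n := c_n$ for $n > N$. Clearly $b \in \ell^1(I)$ with $\sum_n b_n < 2 S_N$.

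Finally, since the first $N$ arguments of $\mathfrak{A}$ applied to $b$ coincide with those applied to $a$, we have $\mathfrak{A}(b_1,\ldots,b_n) = M_n$ for $n \le N$; the remaining terms are positive, so
$$\sum_{n=1}^\infty \mathfrak{A}(b_1,\ldots,b_n) \ge \sum_{n=1}^N M_n > 2C\, S_N > C \sum_{n=1}^\infty b_n,$$
which rules out $C$ as a Hardy constant. As $C>0$ was arbitrary, $\mathfrak{A}$ is not Hardy.

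The only non-routine ingredient is the Stolz--Cesàro step; the truncation and the use of $\inf I = 0$ are straightforward, and the monotonicity axiom of means plays no role in this argument. Note that the hypothesis $\lim a_n = 0$ is not used in the proof itself and can be regarded as a natural accompaniment to $\sum a_n = \infty$ in the intended applications.
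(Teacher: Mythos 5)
Your proposal is correct and takes essentially the same route as the paper: truncate $(a_n)$ after a block long enough that, thanks to $\sum a_n=+\infty$, the indices with $\mathfrak{A}(a_1,\ldots,a_n)\ge 2Ca_n$ dominate the sum, then append a summable tail of small elements of $I$. Your Stolz--Ces\`aro step is just a cleaner repackaging of the paper's choice of the two indices $n_0<n_1$, and your closing remarks (monotonicity and $\lim a_n=0$ unused) are accurate for both versions.
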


\begin{proof}
Let us suppose conversely that $\mathfrak{A}$ is a Hardy mean with a constant $C>0$.
Let us fix $n_0$ and $n_1>n_0$ such that 
\begin{align}
a_n^{-1} \mathfrak{A}(a_1,\ldots a_n) &> 2\,C \textrm{ for any }n>n_0, \nonumber \\
\sum_{i=n_0+1}^{n_1-1} a_n &>\sum_{i=1}^{n_0} a_n. \nonumber
\end{align}

Let $b_n=\begin{cases} a_n &\textrm{, for }n\le n_1, \\ a_{n_1}2^{-n} &\textrm{, for }n> n_1\end{cases}.$
The sequence $(b_n) \in l^1(I)$ will give a contradiction. Indeed, 
\begin{align}
\sum_{n=1}^{\infty} \mathfrak{A}(b_1,\ldots,b_n) &> \sum_{n=n_0+1}^{n_1} \mathfrak{A}(b_1,\ldots,b_n) \nonumber \\
&\ge \sum_{n=n_0+1}^{n_1} 2\,C a_n \nonumber \\
&= C \left( \sum_{n=n_0+1}^{n_1-1} a_n + a_{n_1} + \sum_{n=n_0+1}^{n_1} a_n \right) \nonumber \\
&> C \left( \sum_{n=1}^{n_0} a_n +  \sum_{n=n_1+1}^{\infty} b_n + \sum_{n=n_0+1}^{n_1} a_n \right) \nonumber \\
&= C \sum_{n=1}^{\infty} b_n. \nonumber
\end{align}

\end{proof}

\section{Applications}

In a moment we are going to present applications of Theorem~\ref{thm:main} to two fairly famous families of means.
In both cases necessary and sufficient conditions will be presented. The relevant proofs will be postponed till the next section.

\subsection{Gaussian Product}

Power means were generalized in different ways by many authors (cf., e.g., \cite[chap. III-VI]{bullen} for details).
In particular, in 1947, Gustin \cite{Gustin47} proposed an extension of an earlier Gauss' concept \cite[pp. 361--403]{Gauss}. 

Namely, let $\lambda=(\lambda_1,\ldots,\lambda_p) \in \mathbb{R}^n$ and $v$ be an all-positive-components vector. One defines a sequence
\begin{align}
v^{(0)}&=v, \nonumber \\
v^{(i+1)}&=\left(\srpot{\lambda_1}(v^{(i)}),\srpot{\lambda_2}(v^{(i)}),\ldots,\srpot{\lambda_p}(v^{(i)})\right). \nonumber 
\end{align}

Then it is known that the limit $\lim_{i \rightarrow \infty} \srpot{\lambda_k}(v^{(i)})$ exists and does not depend on $k$.
This common limit is denoted by $\srpot{\lambda_1} \otimes \cdots \otimes \srpot{\lambda_p}$.
Because of various Gauss' results on $\srpot{1} \otimes \srpot{0}$, such means are called Gaussian Means (or, more descriptively, the Gaussian product of Power Means).

We are going to give a necessary and sufficient condition for Gaussian Means to be Hardy. More precisely we are going to prove
\begin{thm}
\label{thm:gaussian}
Let $p \in \mathbb{N}$ and $\lambda \in \mathbb{R}^p$. 
,Then $\srpot{\lambda_0}\otimes \srpot{\lambda_1} \otimes \cdots \otimes \srpot{\lambda_p}$ is Hardy if and only if\,$\max(\lambda_0,\ldots,\lambda_p)<1$.
\end{thm}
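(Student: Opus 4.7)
Set $\alpha=\max_k\lambda_k$ and $\beta=\min_k\lambda_k$, write $q$ for the number of power means, and let $v^{(i)}$ denote the iterates from the definition of $\mathfrak{A}=\srpot{\lambda_0}\otimes\cdots\otimes\srpot{\lambda_p}$. For sufficiency (if $\alpha<1$, then $\mathfrak{A}$ is Hardy), I first dominate $\mathfrak{A}$ by $\srpot{\alpha}$: each coordinate of $v^{(i+1)}$ equals some $\srpot{\lambda_k}(v^{(i)})\le\srpot{\alpha}(v^{(i)})$, so by monotonicity of $\srpot{\alpha}$ the sequence $\srpot{\alpha}(v^{(i)})$ is non-increasing and converges to $\mathfrak{A}(v)$, giving $\mathfrak{A}(v)\le\srpot{\alpha}(v)$. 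Since $\srpot{\alpha}$ is Hardy for $\alpha<1$ by Hardy's 1920 inequality, so is $\mathfrak{A}$ by sandwich.

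For necessity (if $\alpha\ge 1$, then $\mathfrak{A}$ is not Hardy), I apply Theorem~\ref{thm:main} with the test sequence $a_n=1/n$ and aim to prove $n\mathfrak{A}(1,1/2,\ldots,1/n)\to\infty$. Writing $v_n=(1,1/2,\ldots,1/n)$, the inequality $\alpha\ge 1$ gives $\srpot{\alpha}(v_n)\ge\srpot{1}(v_n)=H_n/n$, so at least one coordinate of $v_n^{(1)}$ is of order $(\log n)/n$, while every coordinate is $\ge\srpot{\beta}(v_n)\ge c/n$ for a suitable $c>0$. When $\beta\ge 0$ the argument is short: using $\srpot{\beta}\ge\srpot{0}$,
\[
\mathfrak{A}(v_n)\;\ge\;\srpot{0}(v_n^{(1)})\;=\;\Bigl(\prod_k\srpot{\lambda_k}(v_n)\Bigr)^{1/q}\;\gtrsim\;H_n^{1/q}/n,
\]
and $n\mathfrak{A}(v_n)\to\infty$ as required.

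The principal difficulty is the sub-case $\beta<0$, where a single iteration only yields $\srpot{\beta}(v_n^{(1)})=\Theta(1/n)$ because a negative-exponent power mean is dominated by its smallest coordinate. The plan is to iterate, tracking $M_i=\max v_n^{(i)}$ and $m_i=\min v_n^{(i)}$; direct computation on the extreme configurations (one coordinate $m_i$, the rest $M_i$, or vice versa) shows that while $m_i\ll M_i$,
\[
M_{i+1}\;\le\; M_i\bigl(\tfrac{q-1}{q}\bigr)^{1/\alpha},\qquad m_{i+1}\;\ge\; m_i\bigl(\tfrac{q}{q-1}\bigr)^{1/|\beta|},
\]
so the ratio $M_i/m_i$ is divided by a factor $(q/(q-1))^{1/\alpha+1/|\beta|}>1$ per step. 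After $O(\log\log n)$ iterations the iterates become comparable, and one expects $m_k\gtrsim M_0^\theta m_0^{1-\theta}$ with $\theta=\alpha/(\alpha+|\beta|)\in(0,1)$, which evaluates to a quantity of order $H_n^{\theta}/n$ and gives $n\mathfrak{A}(v_n)\ge nm_k\to\infty$. The main technical obstacle is to make the unbalanced-regime inequalities rigorous uniformly in $i$ and $n$ and to carry them all the way to the balance point, controlling the cumulative error coming from the approximation $m_i\ll M_i$.
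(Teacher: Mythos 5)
Your sufficiency argument is correct and coincides with the paper's: the iteration shows $\mathfrak{A}\le\srpot{\alpha}$, and $\srpot{\alpha}$ is Hardy for $\alpha<1$. Your necessity strategy is also the right one at the top level (apply Theorem~\ref{thm:main} to $a_n=1/n$, and show that a weighted geometric mean of the largest and smallest entries of the iterates is essentially preserved by the Gaussian iteration), and the $\beta\ge0$ shortcut via $\srpot{0}(v_n^{(1)})$ is a clean and correct special case. But in the main case $\beta<0$ the two inequalities you propose cannot deliver the conclusion, because one of them points in the wrong direction.

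Concretely, suppose (i) $M_{i+1}\le c_1M_i$ with $c_1<1$ and (ii) $m_{i+1}\ge c_2m_i$ with $c_2>1$ hold while $M_i>Km_i$ for some threshold $K>1$. All that (i) and (ii) together yield is an \emph{upper} bound on the length $N$ of the unbalanced phase, since $Km_0c_2^{N-1}\le Km_{N-1}<M_{N-1}\le c_1^{N-1}M_0$. An upper bound on $N$ gives no lower bound on $m_N$: nothing in (i)--(ii) forbids the ratio $M_i/m_i$ from collapsing in one step through $M_i$ falling rather than $m_i$ rising, leaving $m_N$ of order $m_0\approx1/n$ and $n\mathfrak{A}(v_n)$ bounded. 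What the argument actually requires is a \emph{lower} bound on the decay of $M$, namely $M_{i+1}=\srpot{\alpha}(v^{(i)})\ge\bigl(\tfrac1qM_i^{\alpha}\bigr)^{1/\alpha}=q^{-1/\alpha}M_i$ (at least one coordinate equals $M_i$). Combined with (ii), this makes $M_i^{s}m_i^{1-s}$ non-decreasing throughout the unbalanced phase for the $s\in(0,1)$ solving $\tfrac{s}{\alpha}\log q=(1-s)\log c_2$, whence $\mathfrak{A}(v_n)\ge m_N\ge K^{-1}M_N^{s}m_N^{1-s}\ge K^{-1}M_0^{s}m_0^{1-s}\ge K^{-1}(\ln n)^{s}/n$, and Theorem~\ref{thm:main} applies. (Your exponent $\theta=\alpha/(\alpha+|\beta|)$ comes from balancing the wrong pair of rates; the correct exponent is different, but only its positivity matters.) A second, smaller defect: (ii) is false as an exact inequality, since already in the extreme configuration $\srpot{\beta}$ equals $\bigl(\tfrac{q-1}{q}m_i^{\beta}+\tfrac1qM_i^{\beta}\bigr)^{1/\beta}<m_i\bigl(\tfrac{q}{q-1}\bigr)^{1/|\beta|}$ for every finite $M_i/m_i$; the uniform version valid for $M_i>Km_i$ is $m_{i+1}\ge m_i\bigl(\tfrac{q}{q-1+K^{-|\beta|}}\bigr)^{1/|\beta|}$. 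These two repairs are precisely what the paper's construction encodes after reducing to the two-variable function $F(a,b)=\mathfrak{A}(a,b,\ldots,b)$ via \eqref{eq:majotimes}: the map $\tau$ records legitimate one-sided bounds on both coordinates (including the $q^{-1}$ lower bound on the large one and the $K$-corrected growth of the small one), and $G$ is the exact $\tau$-invariant playing the role of $M^sm^{1-s}$.
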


\subsection{Gini Means}

Let $\mathfrak{G}_{p,q}(a_1,\ldots,a_n):=\left(\frac{\sum_{i=1}^n a_i^p}{\sum_{i=1}^n a_i^q}\right)^{1/(p-q)}$(cf. \cite{Gini} and \cite[p. 248]{bullen}).
In 2004 P\'ales and Persson proved the following
\begin{prop}[\cite{PaPe04},Theorem 2]
\label{prop:PaPe04Thm2}
Let $p,q \in \mathbb{R}$. If $\mathfrak{G}_{p,q}$ is a Hardy mean, then 
$$\min(p,q) \le 0 \textrm{ and } \max(p,q) \le 1.$$
Conversely, if
$$\min(p,q) \le 0 \textrm{ and } \max(p,q) < 1.$$
then $\mathfrak{G}_{p,q}$ is a Hardy mean.
\end{prop}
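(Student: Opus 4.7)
Using the symmetry $\mathfrak{G}_{p,q}=\mathfrak{G}_{q,p}$, I assume $p\ge q$ throughout and split into necessity and sufficiency.

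\smallskip

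\noindent\emph{Necessity.} The two conditions are treated independently.

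If $\min(p,q)=q>0$, I would test against the geometric sequence $a_n:=r^n$ with $r\in(0,1)$. For every $s>0$ the tails $\sum_{i=1}^\infty a_i^s=r^s/(1-r^s)$ are finite, so $\mathfrak{G}_{p,q}(a_1,\ldots,a_n)$ tends to a strictly positive limit as $n\to\infty$. Hence $\sum_{n=1}^\infty\mathfrak{G}_{p,q}(a_1,\ldots,a_n)=+\infty$ while $\sum a_n=r/(1-r)<\infty$, and the Hardy inequality fails directly---Theorem~\ref{thm:main} is not even needed in this regime.

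If $\max(p,q)=p>1$, I would test against $a_n:=1/n$, for which $\sum a_n=\infty$ and $a_n\to0$. Standard asymptotics of $\sum_{i=1}^n i^{-s}$---bounded for $s>1$, of order $\log n$ for $s=1$, of order $n^{1-s}/(1-s)$ for $s<1$---yield, after a short case analysis on the location of $q$, that $a_n^{-1}\mathfrak{G}_{p,q}(a_1,\ldots,a_n)\to+\infty$; Theorem~\ref{thm:main} then delivers non-Hardy.

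\smallskip

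\noindent\emph{Sufficiency.} Assume $q\le0$ and $p<1$. The central analytic ingredient is the convexity in $s$ of
\[
f_a(s):=\log\sum_{i=1}^n a_i^s=\log\sum_{i=1}^n e^{s\log a_i},
\]
a log-sum-of-exponentials, whose convexity follows from the Cauchy--Schwarz inequality applied to $g(s)=\sum a_i^s$. Since
\[
\log\mathfrak{G}_{p,q}(a_1,\ldots,a_n)=\frac{f_a(p)-f_a(q)}{p-q}
\]
is a secant slope of a convex function, it is monotone non-decreasing in each of $p$ and $q$. Replacing $q$ by the larger value $0$ therefore yields the pointwise comparison
\[
\mathfrak{G}_{p,q}(a_1,\ldots,a_n)\le\mathfrak{G}_{p,0}(a_1,\ldots,a_n)=\srpot{p}(a_1,\ldots,a_n).
\]
By Hardy's 1920 theorem recalled in the introduction, $\srpot{p}$ is Hardy for $p<1$, and this pointwise domination transfers the Hardy property to $\mathfrak{G}_{p,q}$.

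\smallskip

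\noindent The main obstacle is the parameter monotonicity of $\mathfrak{G}_{p,q}$ via log-convexity of $f_a$; the rest of the argument is a symmetric reduction together with routine asymptotic estimates feeding into Theorem~\ref{thm:main}.
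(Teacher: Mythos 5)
Your argument is essentially correct, but note first that the paper does not prove this proposition at all: it is imported wholesale from P\'ales--Persson \cite{PaPe04}, and the paper's own contribution (Theorem~\ref{thm:gini}) is precisely the remaining boundary case $\max(p,q)=1$. So you have supplied a self-contained proof of a cited result. On the merits: the geometric-sequence test for $\min(p,q)>0$ is fine (both $\sum a_i^p$ and $\sum a_i^q$ converge to positive limits, so the means do not tend to $0$ and the series of means diverges against a summable input); the $a_n=1/n$ test for $\max(p,q)>1$ fed into Theorem~\ref{thm:main} is exactly the technique the paper itself uses in the proof of Theorem~\ref{thm:gini}, where it is combined with the comparison $\mathfrak{G}_{p,q}\ge\mathfrak{G}_{1,-k}$ to avoid your case analysis on $q$; and the sufficiency via log-convexity of $s\mapsto\log\sum a_i^s$, the resulting monotonicity of the secant slope, the identity $\mathfrak{G}_{p,0}=\srpot{p}$, and Hardy's 1920 theorem is the standard route (the comparison theorem for Gini means is the one the paper cites from \cite[pp.~249--250]{bullen}). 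Two small edge cases deserve a sentence each: the displayed definition of $\mathfrak{G}_{p,q}$ requires $p\ne q$, so the case $p=q$ (and the endpoint $\mathfrak{G}_{0,0}=\srpot{0}$ arising when you push $q$ up to $0$ with $p\le 0$) needs the limiting definition of the Gini mean, with Carleman's inequality covering the geometric mean; these do not affect the substance of the argument.
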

They also put forward a conjecture \cite[Open Problem 3.]{PaPe04} that the sufficient condition in proposition above is also a necessary one. We will justify this conjecture. Namely, we will prove the following
\begin{thm}
\label{thm:gini}
Let $p,q \in \mathbb{R}$. Then $\mathfrak{G}_{p,q}$ is a Hardy mean if and only if $\min(p,q)\le0$ and $\max(p,q)<1$ .
\end{thm}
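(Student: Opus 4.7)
The strategy will be to combine Proposition~\ref{prop:PaPe04Thm2} with Theorem~\ref{thm:main}. The sufficiency (``if'') direction is already contained in Proposition~\ref{prop:PaPe04Thm2}, so only the necessity requires new work. Assuming $\mathfrak{G}_{p,q}$ is Hardy, Proposition~\ref{prop:PaPe04Thm2} already yields $\min(p,q)\le 0$ and $\max(p,q)\le 1$; the only gap with the desired conclusion is the boundary case $\max(p,q)=1$. Using the symmetry $\mathfrak{G}_{p,q}=\mathfrak{G}_{q,p}$ (visible directly from the definition, since both ratio and exponent invert simultaneously) we may assume $p=1$ and $q\le 0$. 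The whole problem therefore reduces to showing that $\mathfrak{G}_{1,q}$ fails to be Hardy for every $q\le 0$.

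The natural candidate for Theorem~\ref{thm:main} is the harmonic sequence $a_n=1/n$, which trivially satisfies $\sum a_n=\infty$ and $a_n\to 0$. I plan to compute $\sum_{i=1}^n a_i=H_n\sim\ln n$ and, by integral comparison, $\sum_{i=1}^n i^{-q}\sim n^{1-q}/(1-q)$ for $q<0$ (and $=n$ in the trivial case $q=0$). Plugging into
\[
\mathfrak{G}_{1,q}(a_1,\ldots,a_n)=\left(\frac{\sum_{i=1}^n a_i}{\sum_{i=1}^n a_i^q}\right)^{1/(1-q)}
\]
should produce $\mathfrak{G}_{1,q}(a_1,\ldots,a_n)\sim\bigl((1-q)\ln n\bigr)^{1/(1-q)}/n$ (with the reading $\ln n/n$ at $q=0$). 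Multiplying by $a_n^{-1}=n$ then gives $a_n^{-1}\mathfrak{G}_{1,q}(a_1,\ldots,a_n)\sim\bigl((1-q)\ln n\bigr)^{1/(1-q)}\to\infty$, and Theorem~\ref{thm:main} finishes the job.

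The only technical step requiring any care is the asymptotic for the power sum $\sum_{i=1}^n i^{-q}$. This is standard via bracketing with $\int_0^n x^{-q}\,dx$ and $\int_1^{n+1}x^{-q}\,dx$, but the case analysis needs to be carried out uniformly across the regime $q\le 0$, paying attention to the boundary $q=0$ (where the sum is exactly $n$) and to the fact that the exponent $1/(1-q)$ is positive, so that the $\to\infty$ behaviour of the ratio is preserved under taking the $(1-q)$-th root. No deeper obstacle is expected; the entire conceptual content of the argument is the realization that Theorem~\ref{thm:main} is precisely the instrument needed to upgrade the Pales--Persson necessary condition from $\max(p,q)\le 1$ to $\max(p,q)<1$, thereby settling their open problem in the affirmative.
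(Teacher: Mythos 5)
Your proposal is correct and follows essentially the same route as the paper: apply Theorem~\ref{thm:main} to the harmonic sequence $a_n=1/n$, for which $a_n^{-1}\mathfrak{G}_{1,q}(a_1,\ldots,a_n)$ grows like a positive power of $\ln n$. The only cosmetic difference is that the paper first uses monotonicity in the parameters (and symmetry) to reduce to $\mathfrak{G}_{1,-k}$ with $k\in\mathbb{N}$ and then the crude bounds $\sum_{i=1}^n i^k\le n^{k+1}$ and $\sum_{i=1}^n 1/i\ge\ln n$, whereas you keep general $q\le 0$ and invoke an integral comparison for $\sum_{i=1}^n i^{-q}$ --- either estimate suffices since only divergence of the ratio is needed.
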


\section{Proofs of Theorem~\ref{thm:gaussian} and Theorem~\ref{thm:gini}}

In both proofs used will the elementary estimations
\begin{align}
\sum_{i=1}^{n} i^k &\le n^{k+1} \quad \textrm{ for any } n \in \mathbb{N}, \label{eq:nk}\\
\sum_{i=1}^{n} \tfrac{1}{i} &\ge \ln n \quad \textrm{ for any } n \in \mathbb{N}. \label{eq:n-1}
\end{align}

\subsection{Proof of Theorem~\ref{thm:gaussian}}
Before we begin the proof, let us note that if $\lambda_i\le \lambda_i'$ for every $i=0,1,2,\ldots,p$, then
\begin{equation}
\srpot{\lambda_0}\otimes \srpot{\lambda_1} \otimes \cdots \otimes \srpot{\lambda_p} \le \srpot{\lambda_0'}\otimes \srpot{\lambda_1'} \otimes \cdots \otimes \srpot{\lambda_p'}. \label{eq:majotimes}
\end{equation}

Therefore, the $(\Leftarrow)$ part is simply implied by the fact that 
the mean $\srpot{\lambda_0}\otimes \srpot{\lambda_1} \otimes \cdots \otimes \srpot{\lambda_p}$ is
majorized by a Hardy mean $\srpot{\max(\lambda_1,\ldots,\lambda_p)}$, so it is Hardy too (recall that
$\max(\lambda_1,\ldots,\lambda_p)<1$).

Now we are going to prove the $(\Rightarrow)$ implication. One may assume that $i \mapsto \lambda_i$ is non-increasing. 
 Moreover, by \eqref{eq:majotimes}, having $\lambda_0\ge 1$ we estimate from below: $\lambda_0$ by $1$ and $\lambda_1,\,\lambda_2,\ldots,\,\lambda_p$ by $-\lambda$ for certain $\lambda>0$ and we are going to prove that
$$\mathfrak{A}:=\srpot{1}\otimes \underbrace{\srpot{-\lambda} \otimes \cdots \otimes \srpot{-\lambda}}_p$$
is not Hardy.

To this end, let us consider a two variable function $F(a,b):=\mathfrak{A}(a,\underbrace{b,\ldots,b}_{p})$ and fix $\theta>1$.
Then, for $a>\theta b$,
\begin{align}
F(a,b)&=
\mathfrak{A}\Bigg(\frac{a+pb}{p+1},\underbrace{\left( \frac{p+1}{a^{-\lambda}+pb^{-\lambda}}\right)^{1/\lambda},\ldots,\left( \frac{p+1}{a^{-\lambda}+pb^{-\lambda}}\right)^{1/\lambda}}_{p} \Bigg)\nonumber \\
&\ge \mathfrak{A}\Bigg(\tfrac{1}{p+1}a,\underbrace{\left( \frac{p+1}{(\theta b)^{-\lambda}+pb^{-\lambda}}\right)^{1/\lambda},\ldots,\left( \frac{p+1}{(\theta b)^{-\lambda}+pb^{-\lambda}}\right)^{1/\lambda}}_{p} \Bigg)\nonumber \\
&= \mathfrak{A}\Bigg(\tfrac{1}{p+1}a,\underbrace{\left( \frac{p+1}{\theta^{-\lambda}+p}\right)^{1/\lambda}b,\ldots,\left( \frac{p+1}{\theta^{-\lambda}+p}\right)^{1/\lambda}b}_{p}\Bigg) \nonumber \\
&= F\Bigg(\tfrac{1}{p+1}a,\left( \frac{p+1}{\theta^{-\lambda}+p}\right)^{1/\lambda}b\Bigg) \label{eq:FpropB}
\end{align}

Introduce two more mappings 
\begin{align}
\tau \colon (a,b) &\mapsto \left(\tfrac{1}{p+1}a,\left( \frac{p+1}{\theta^{-\lambda}+p}\right)^{1/\lambda}b\right), \nonumber \\
G \colon (a,b) &\mapsto \left( a^{\log_{p+1} \left( \tfrac{p+1}{\theta^{-\lambda}+p} \right)} b ^\lambda \right) ^{1/\left(\lambda+\log_{p+1} \left( \tfrac{p+1}{\theta^{-\lambda}+p} \right) \right)}. \nonumber 
\end{align}

With this notations there clearly hold
\begin{itemize}
\item $G(a,b) \in (\min(a,b),\max(a,b))$,
\item $F(a,b) \in (\min(a,b),\max(a,b))$,
\item $ G \circ \tau(a,b) =G(a,b)$,
\item $F$, $G$ and $\tau$ are homogeneous.
\end{itemize}
Moreover, inequality \eqref{eq:FpropB} assumes now a compact form
$$ F \circ \tau(a,b) \le F(a,b) \textrm{, for } a>\theta b.$$
We will prove that
\begin{equation}
F(a,b) > \tfrac{1}{\theta(p+1)}G(a,b) \textrm{ for any }a>b. \label{eq:F>G}
\end{equation}
The case when $\tfrac{a}{b}<\theta(p+1)$ is simply implied by first and second property.

Otherwise, let $a_0=a$, $a_0=b$, $(a_{i+1},b_{i+1})=\tau(a_i,b_i)$. By the definition of $\tau$, $a_n \rightarrow 0$ and $b_n \rightarrow +\infty$. Denote by $N$ the smallest natural number such that $a_N\le \theta b_N$.
Obviously $a_{N-1}>\theta b_{N-1}$, thus
\begin{align}
a_N = \tfrac{1}{p+1} a_{N-1}&>\tfrac{\theta}{p+1} b_{N-1}=\tfrac{\theta}{p+1} \left( \frac{\theta^{-\lambda}+p}{p+1} \right)^{1/\lambda} b_N \nonumber \\
&>\tfrac{\theta}{p+1} \left( \frac{\theta^{-\lambda}+\theta^{-\lambda}p}{p+1} \right)^{1/\lambda} b_N = \tfrac{1}{p+1} b_N. \nonumber
\end{align}
Hence
\begin{align}
F(a,b)&=F(a_0,b_0) \ge F\circ \tau^N (a_0,b_0) 
 = F(a_N,b_N) 
 \ge \min(a_N,b_N) \nonumber \\
&> \tfrac{1}{\theta}a_N 
 \ge \tfrac{1}{\theta (p+1)} \max(a_N,b_N) 
 \ge \tfrac{1}{\theta (p+1)} G(a_N,b_N) \nonumber \\
&=\tfrac{1}{\theta (p+1)} G \circ \tau^N(a_0,b_0)
 =\tfrac{1}{\theta (p+1)} G(a_0,b_0)=\tfrac{1}{\theta (p+1)} G(a,b). \nonumber
\end{align}

Lastly, using: -the fact that the mean $\mathfrak{A}$ is greater then its minimal argument, -homogeneity of $F$, -inequality \eqref{eq:n-1}, -inequality \eqref{eq:F>G}, one obtains
\begin{align}
(\tfrac1n)^{-1} \mathfrak{A}(1,\tfrac12,\tfrac13,\ldots,\tfrac1n) 
&= n F(\srpot{1}(1,\tfrac12,\tfrac13,\ldots,\tfrac1n),\srpot{-\lambda}(1,\tfrac12,\tfrac13,\ldots,\tfrac1n)) \nonumber \\
&\ge n F\left(\frac{\ln n}{n},\frac1n\right)=F(\ln n,1) 
 \ge \tfrac1{\theta(p+1)} G(\ln n,1) \nonumber \\
&\ge  \tfrac1{\theta(p+1)} (\ln n)^{\frac{\log_{p+1} \left( \tfrac{p+1}{\theta ^{-\lambda}+p} \right) }{\lambda+\log_{p+1} \left( \tfrac{p+1}{\theta^{-\lambda}+p} \right)}}. \nonumber
\end{align}

But, for any $\theta >1$ and $\lambda>0$, the right-most term tends to infinity when $n \rightarrow \infty$. So, by Theorem~\ref{thm:main}, $\mathfrak{A}$ is not Hardy.

\begin{xrem} Often the right-most term tends to infinity very slowly. For example ($\lambda=5$ and $p=3$) one obtains ($\theta=\tfrac{3}{2}$)
$$(\tfrac1n)^{-1} \srpot{1}\otimes \srpot{-5}\otimes \srpot{-5}\otimes \srpot{-5}(1,\tfrac12,\tfrac13,\ldots,\tfrac1n) > \tfrac16 (\ln n)^{0.0341}.$$
In particular it implies that the left hand side is greater than $1$ (a trivial estimation) for $n>10^{2.86\cdot 10^{22}}$.
\end{xrem}

\subsection{Proof of Theorem~\ref{thm:gini}}
The $(\Leftarrow)$ implication is implied by Proposition~\ref{prop:PaPe04Thm2}.

Working towards $(\Rightarrow)$ implication, let us assume that $\max(p,q)\ge 1$. We will prove that $\mathfrak{G}_{p,q}$ in not Hardy.
We know that if $p\le p'$ and $q\le q'$ then $\mathfrak{G}_{p,q} \le \mathfrak{G}_{p',q'}$ (cf. \cite[pp.249--250]{bullen}).
Moreover, $\mathfrak{G}_{p,q}=\mathfrak{G}_{q,p}$, hence 
\begin{equation}
\mathfrak{G}_{p,q} \ge \mathfrak{G}_{1,-k}\textrm{ for some }k \in \mathbb{N}. \label{eq:majorGini}
\end{equation}
From now on we assume that $p=1$ and $q=-k$ for some $k \in \mathbb{N}$.

Upon taking the sequence $a_i=\tfrac{1}{i}$, by \eqref{eq:nk} and \eqref{eq:n-1}, one obtains

$$a_n^{-1}\mathfrak{G}_{1,-k}(a_1,\ldots,a_n)
=n \left(\frac{\sum_{i=1}^n \tfrac{1}{i}}{\sum_{i=1}^n i^k}\right)^{1/(k+1)} 
\ge n\left(\frac{\ln n}{n^{k+1}}\right)^{1/(k+1)}=(\ln n)^{1/(k+1)}.$$
But $(\ln n)^{1/(k+1)} \rightarrow +\infty$  so, by Theorem~\ref{thm:main}, the Gini mean $\mathfrak{G}_{1,-k}$ is Hardy for no $k \in \mathbb{N}$. Hence, by~\eqref{eq:majorGini}, $\max(p,q) \ge1$ implies $\mathfrak{G}_{p,q}$ not being Hardy.

\end{document}